\newtheorem{theorem}{Theorem}[section]
\newtheorem{lemma}[theorem]{Lemma}
\newtheorem{fact}[theorem]{Fact}
\newtheorem*{mthm}{Main Theorem}
\newtheorem{cor}[theorem]{Corollary}
\newcommand{\mrm}{\mathrm}
\newcommand{\mbb}{\mathbb}
\newcommand{\mcal}{\mathcal}
\newcommand{\msf}{\mathsf}
\newcommand{\lchon}{\textrm{``}}
\newcommand{\rchon}{\textrm{''}}
\newcommand{\rst}{\!\upharpoonright\!}
\newcommand{\dom}{\mathop{\mathrm{dom}} \nolimits}
\newenvironment{renumerate}%
{\begin{enumerate}}{\end{enumerate}}
{\begin{enumerate}}{\end{enumerate}}
\newenvironment{aenumerate}%
{\begin{enumerate}}{\end{enumerate}}
\title{Martin's Maximum and the Diagonal Reflection Principle\footnote{
This research is supported by Simons Foundation Grant 318467 and JSPS Kakenhi Grant Number 18K03397.
}}
\author{Sean D.~Cox (Virginia Commonwealth University) \\
Hiroshi Sakai (Kobe University)}
\date{}
\begin{document}

\maketitle

\begin{abstract}
We prove that Martin's Maximum does not imply
the Diagonal Reflection Principle for stationary subsets of $[ \omega_2 ]^\omega$.
\end{abstract}

%%%%%%%%%%%%%%%%%%%%%%%%%%%%%%%%%%%%%%%%%%%%%%%%%%%%%%%%%%%%
%%%%%%%%%%%%%%%%%%%%%%%%%%%%%%%%%%%%%%%%%%%%%%%%%%%%%%%%%%%%

\section{Introduction} \label{sec:intro}

In Foreman-Magidor-Shelah \cite{FMS}, it was shown that Martin's Maximum $\msf{MM}$
implies the following stationary reflection principle,
which is called the Weak Reflection Principle:
\begin{list}{}{\setlength{\labelwidth}{50pt}\setlength{\leftmargin}{55pt}}
\item[$\msf{WRP} \equiv$]
For any cardinal $\lambda \geq \omega_2$ and any stationary $X \subseteq [ \lambda ]^\omega$,
there is $R \in [ \lambda ]^{\omega_1}$ with $R \supseteq \omega_1$ such that
$X \cap [R]^\omega$ is stationary in $[R]^\omega$.
\end{list}
$\msf{WRP}$ is known to have many interesting cosequences such as
Chang's Conjecture (Foreman-Magidor-Shelah \cite{FMS}),
the presaturation of the non-stationary ideal over $\omega_1$ (Feng-Magidor \cite{FM}),
$2^\omega \leq \omega_2$ (folklore) and
the Singular Cardinal Hypothesis (Shelah \cite{Sh:RP_SCH}).

As for stationary reflection principles, simultaneous reflection is often discussed.
Larson \cite{Larson} proved that $\msf{MM}$ also implies the following simultaneous reflection principle
of $\omega_1$-many stationary sets:
\begin{list}{}{\setlength{\labelwidth}{55pt}\setlength{\leftmargin}{60pt}}
\item[$\msf{WRP}_{\omega_1} \equiv$]
For any cardinal $\lambda \geq \omega_2$ and any sequence $\langle X_\xi \mid \xi < \omega_1 \rangle$
of stationary subsets of $[ \lambda ]^\omega$,
there is $R \in [ \lambda ]^{\omega_1}$ with $R \supseteq \omega_1$ such that
$X_\xi \cap [R]^\omega$ is stationary in $[R]^\omega$ for all $\xi < \omega_1$.
\end{list}

Cox \cite{Cox:DRP} formulated the following strengthening of $\msf{WRP}_{\omega_1}$,
which is called the Diagonal Reflection Principle:
\begin{list}{}{\setlength{\labelwidth}{50pt}\setlength{\leftmargin}{55pt}}
\item[$\msf{DRP} \equiv$]
For any cardinal $\lambda \geq \omega_2$ and any sequence $\langle X_\alpha \mid \alpha < \lambda \rangle$
of stationary subsets of $[ \lambda ]^\omega$,
there is $R \in [ \lambda ]^{\omega_1}$ with $R \supseteq \omega_1$ such that
$X_\alpha \cap [R]^\omega$ is stationary in $[R]^\omega$ for all $\alpha \in R$.
\end{list}
Recently, Fuchino-Ottenbreit-Sakai \cite{FOS} proved that
a variation of $\msf{DRP}$ is equivalent to some variation of the downward L\"{o}wenheim-Skolem theorem
of the stationary logic.
Cox \cite{Cox:DRP} also introduced the following weakning of $\msf{DRP}$,
where $X \subseteq [ \lambda ]^\omega$ is said to be \emph{projectively stationary} if
the set $\{ x \in X \mid x \cap \omega_1 \in S \}$ is stationary in $[ \lambda ]^\omega$
for any stationary $S \subseteq \omega_1$:
\begin{list}{}{\setlength{\labelwidth}{50pt}\setlength{\leftmargin}{55pt}}
\item[$\msf{wDRP} \equiv$]
For any cardinal $\lambda \geq \omega_2$ and any sequence $\langle X_\alpha \mid \alpha < \lambda \rangle$
of projectively stationary subsets of $[ \lambda ]^\omega$,
there is $R \subseteq [ \lambda ]^{\omega_1}$ with $R \supseteq \omega_1$ such that
$X_\alpha \cap [R]^\omega$ is stationary in $[R]^\omega$ for all $\alpha \in R$.
\end{list}

Cox \cite{Cox:DRP} proved that $\msf{MM}$ implies $\msf{wDRP}$,
but it remained open whether $\msf{MM}$ implies $\msf{DRP}$.
In this paper, we prove that $\msf{MM}$ does not imply $\msf{DRP}$.
In fact, we prove slightly more.

To state our result, we recall $+$-versions of the forcing axiom.
For a class $\Gamma$ of forcing notions and a cardinal $\mu \leq \omega_1$,
$\msf{MA}^{+ \mu} ( \Gamma )$ is the following statement:
\begin{list}{}{\setlength{\labelwidth}{65pt}\setlength{\leftmargin}{65pt}}
\item[$\msf{MA}^{+ \mu} ( \Gamma ) \equiv$]
For any $\mbb{P} \in \Gamma$, any sequence $\langle D_\xi \mid \xi < \omega_1 \rangle$
of dense subsets of $\mbb{P}$ and any sequence $\langle \dot{S}_\eta \mid \eta < \mu \rangle$ of
$\mbb{P}$-names of stationary subsets of $\omega_1$, there is a filter $g \subseteq \mbb{P}$
such that
\begin{renumerate}
\item $g \cap D_\xi \neq \emptyset$ for any $\xi < \omega_1$,
\item $\dot{S}_\eta^g =
\{ \alpha < \omega_1 \mid \exists p \in g , \ p \Vdash_\mbb{P} \lchon \alpha \in \dot{S}_\eta \rchon \}$
is stationary in $\omega_1$ for all $\eta < \mu$.
\end{renumerate}
\end{list}
Let $\msf{MA}^{+ \mu} ( \sigma\mbox{-closed} )$ denote $\msf{MA}^{+ \mu} ( \Gamma )$ for
the class $\Gamma$ of all $\sigma$-closed forcing notions.
Also, let $\msf{MM}^{+ \mu}$ denote $\msf{MA}^{+ \mu} ( \Gamma )$ for the class $\Gamma$
of all $\omega_1$-stationary preserving forcing notions.
It is well-known that $\msf{MA}^{+ \omega_1} ( \sigma\mbox{-closed} )$ holds if a supercompact
cardinal is L\'{e}vy collapsed to $\omega_2$ and that $\msf{MM}^{+ \omega_1}$ holds
in the standard model of $\msf{MM}$ constructed in Foreman-Magidor-Shelah \cite{FMS}.

Cox \cite{Cox:DRP} proved that $\msf{MA}^{+ \omega_1} ( \sigma\mbox{-closed} )$ implies
$\msf{DRP}$. So $\msf{MM}^{+ \omega_1}$ implies $\msf{DRP}$.
In this paper, we prove that $\msf{MM}^{+ \omega}$ does not imply $\msf{DRP}$:

\begin{mthm}
Assume $\msf{MM}^{+ \omega}$ holds.
Then there is a forcing extension in which $\msf{MM}^{+ \omega}$ remains to hold,
but $\msf{DRP}$ fails at $[ \omega_2 ]^\omega$.
\end{mthm}

Our proof of the Main Theorem is based on the proof of the classical result, due to Beaudoin \cite{Beaudoin} and
Magidor, that the Proper Forcing Axiom does not imply the reflection of
stationary subsets of the set $\{ \alpha \in \omega_2 \mid \mrm{cof} ( \alpha ) = \omega \}$.
Similar arguments are used in K\"{o}nig-Yoshinobu \cite{KY}, Yoshinobu \cite{Y1}, \cite{Y2}
and Cox \cite{Cox:sep}, to separate reflection principles from strong forcing axioms.

We will prove the Main Theorem in Section \ref{sec:proof}.
In Section \ref{sec:preliminaries}, we will present our notation and basic facts
used in this paper.

%%%%%%%%%%%%%%%%%%%%%%%%%%%%%%%%%%%%%%%%%%%%%%%%%%%%%%%%%%%%
%%%%%%%%%%%%%%%%%%%%%%%%%%%%%%%%%%%%%%%%%%%%%%%%%%%%%%%%%%%%

\section{Preliminaries} \label{sec:preliminaries}

Here we present our notation and basic facts.
See Jech \cite{Jech} for those which are not mentioned here.

First, we recall the notion of stationary sets in $[W]^\omega$.
Let $W$ be a set with $\omega_1 \subseteq W$.
$Z \subseteq [W]^\omega$ is said to be \emph{club} in $[W]^\omega$
if $Z$ is $\subseteq$-cofinal in $[W]^\omega$,
and $\bigcup_{n \in \omega} x_n \in Z$ for any $\subseteq$-increasing sequence
$\langle x_n \mid n < \omega \rangle$ of elements of $Z$.
$X \subseteq [W]^\omega$ is said to be \emph{stationary} in $[W]^\omega$
if $X \cap Z \neq \emptyset$ for any club $Z \subseteq [W]^\omega$.
For $S \subseteq \omega_1$,
$S$ is stationary in $\omega_1$ in the usual sense
if and only if $S$ is stationary in $[ \omega_1 ]^\omega$ in the above sense.

We will use the following standard facts without any reference.
Proofs can be found also in Jech \cite{Jech}.

\begin{fact}[(1) Kueker \cite{Kueker}, (2) Menas \cite{Menas}] \label{fact:stat_basic}
Suppose $W$ is a set $\supseteq \omega_1$ and $X$ is a subset of $[W]^\omega$.
\begin{aenumerate}
\item $X$ is stationary if and only if for any function $F : [W]^{< \omega} \to W$
there is a non-empty $x \in X$ which is closed under $F$, i.e.~$F(a) \in x$ for all $a \in [x]^{< \omega}$.
\item Suppose $W' \supseteq W$. Then $X$ is stationary in $[W]^\omega$ if and only if
the set $\{ x' \in [W']^\omega \mid x' \cap W \in X \}$ is stationary in $[W']^\omega$.
\end{aenumerate}
\end{fact}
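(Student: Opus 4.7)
My plan is to prove (1) first and then derive (2) as an easy corollary.

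\textbf{Proof of (1).} For the forward direction, given $F : [W]^{<\omega} \to W$, the set $\mrm{Cl}_F := \{x \in [W]^\omega : x \text{ is nonempty and closed under } F\}$ is a club in $[W]^\omega$. Closure under $\subseteq$-increasing countable unions is immediate, and $\subseteq$-cofinality follows by iterating the ``close under $F$'' operator $\omega$-many times starting from any countable seed. Stationarity of $X$ yields some $x \in X \cap \mrm{Cl}_F$. For the reverse direction, given a club $Z \subseteq [W]^\omega$, the key step is to choose witnesses monotonically: using $\subseteq$-cofinality of $Z$ and well-foundedness of $( [W]^{<\omega}, \subsetneq )$, define $z_a \in Z$ by recursion on $a$, picking $z_a$ to contain the countable set $a \cup \bigcup_{b \subsetneq a} z_b$. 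This produces a monotone family: $a \subseteq b \Rightarrow z_a \subseteq z_b$. Enumerate each $z_a = \{ z_a(n) : n < \omega \}$ and, via a standard coding (using, say, a countable set of ``index markers'' inside $W$), build a single $F : [W]^{<\omega} \to W$ such that any nonempty $x$ closed under $F$ contains $z_a$ for every finite $a \subseteq x$. Enumerating $x = \{ x_n : n < \omega \}$ and setting $a_n = \{ x_0, \dots, x_n \}$, monotonicity gives an $\subseteq$-increasing sequence $( z_{a_n} )$ in $Z$ with $\bigcup_n z_{a_n} = x$, so $x \in Z$ by closure of $Z$. Applying the hypothesis to this $F$ then produces $x \in X \cap Z$.

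\textbf{Proof of (2).} Both directions reduce to (1). For the reverse direction, assume the ``projected'' set $\{ x' \in [W']^\omega : x' \cap W \in X \}$ is stationary in $[W']^\omega$; given any $F : [W]^{<\omega} \to W$, extend $F$ arbitrarily to $F' : [W']^{<\omega} \to W'$, apply the hypothesis to obtain $x'$ closed under $F'$ with $x' \cap W \in X$, and observe that $x := x' \cap W$ is closed under $F$ (since $F$ takes values in $W$); then (1) gives stationarity of $X$. For the forward direction, given $G : [W']^{<\omega} \to W'$, define $y_a \in [W']^\omega$ monotonically for $a \in [W]^{<\omega}$, with $y_a$ closed under $G$ and $a \cup \bigcup_{b \subsetneq a} y_b \subseteq y_a$; such $y_a$ exist by iterating $G$ on a countable seed. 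Encode enumerations of $y_a \cap W$ into a single $F : [W]^{<\omega} \to W$ as in (1). Stationarity of $X$ and (1) yield $x \in X$ closed under $F$; set $x' := \bigcup \{ y_a : a \in [x]^{<\omega} \}$, a countable subset of $W'$ with $x' \cap W = x$. Monotonicity ensures any finite subset of $x'$ is contained in some single $y_b$, which is $G$-closed, so $x'$ is closed under $G$.

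The main technical point, needed in both the ``if'' direction of (1) and the forward direction of (2), is arranging the witnesses monotonically. Without monotonicity one only recovers $x = \bigcup_n w_n$ for countably many witnesses $w_n \in Z$, but clubs in $[W]^\omega$ are defined to be closed only under $\subseteq$-increasing countable unions; the recursive choice of witnesses along $( [W]^{<\omega}, \subsetneq )$ sidesteps this cleanly. The coding of countably many auxiliary functions into a single $F : [W]^{<\omega} \to W$ is a minor bookkeeping issue that can be handled by fixing in advance a countable sequence of distinct ``index'' elements inside $W$ (using $\omega_1 \subseteq W$).
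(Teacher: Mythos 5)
The paper does not prove this Fact at all: it is stated as a standard result attributed to Kueker and Menas, with the reader directed to Jech for proofs. So there is no in-paper argument to compare against; your proposal has to stand on its own, and it essentially does --- it is the standard textbook proof (Jech, Theorem 8.28 and its corollaries). The genuinely important idea, which you correctly isolate, is the monotone choice of witnesses $z_a \in Z$ by recursion on $a \in ([W]^{<\omega}, \subsetneq)$, so that an $F$-closed $x$ becomes the union of the \emph{increasing} chain $z_{a_0} \subseteq z_{a_1} \subseteq \cdots$; without monotonicity the definition of club (closure only under increasing unions) does not let you conclude $x \in Z$. The reductions in (2) to the characterization in (1), including the directed union $x' = \bigcup\{y_a : a \in [x]^{<\omega}\}$ and the verification that $x' \cap W = x$, are also correct.

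The one place you should be slightly more careful is the coding step, which you dismiss as bookkeeping. The naive ``index marker'' scheme --- decode $b \in [W]^{<\omega}$ as a pair (index $n$, set $a$) by splitting off a marker --- is ambiguous, because the markers are themselves elements of $W$ and may occur inside the sets $a$ you are indexing, so $b \mapsto (a,n)$ is not well defined. This is repairable in standard ways (e.g., since the statement is invariant under bijections of $W$, assume $W$ is an ordinal and use a pairing function $\Gamma$ so that a single element $\Gamma(\alpha,n)$ of $b$ unambiguously carries the index; or first prove the analogue of (1) for functions $F : [W]^{<\omega} \to [W]^{\leq\omega}$ with countable values, for which no coding is needed, and then reduce). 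It does not affect the correctness of the overall argument, but as written the decoding would fail on finite sets that happen to contain markers, so a referee would ask you to spell out one of these fixes.
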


Here we slightly simplify $\msf{DRP}$ at $[ \omega_2 ]^\omega$.

\begin{lemma} \label{lem:DRP_easy}
Assume $\msf{DRP}$ at $[ \omega_2 ]^\omega$.
Then, for any sequence $\langle X_\alpha \mid \alpha < \omega_2 \rangle$ of
stationary subsets of $[ \omega_2 ]^\omega$, there is $\delta \in \omega_2 \setminus \omega_1$
such that $X_\alpha \cap [ \delta ]^\omega$ is stationary in $[ \delta ]^\omega$
for all $\alpha < \delta$.
\end{lemma}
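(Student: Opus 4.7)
The plan is to apply $\msf{DRP}$ at $[\omega_2]^\omega$ to a carefully modified version of the sequence so that the reflecting set $R$ is forced to be transitive; then $R$ coincides with an ordinal $\delta$ and the conclusion is immediate because ``$\alpha\in R$'' becomes ``$\alpha<\delta$'' and $[R]^\omega=[\delta]^\omega$.

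For each $\beta<\omega_2$ with $\beta>0$ I would fix a surjection $e_\beta:\omega_1\to\beta$ (which exists since $|\beta|\le\omega_1$), and define a Skolem function $f:[\omega_2]^{<\omega}\to\omega_2$ by setting $f(\{\beta,\xi\})=e_\beta(\xi)$ when $0<\beta<\omega_2$ and $\xi<\omega_1$, and $f(a)=0$ otherwise. Let $D_f=\{x\in[\omega_2]^\omega:x\text{ is closed under }f\}$ and $C=\{x\in[\omega_2]^\omega:\omega_1\in x\}$; both are club in $[\omega_2]^\omega$. Put $X'_\alpha:=X_\alpha\cap D_f\cap C$, which remains stationary. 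Apply $\msf{DRP}$ to $\langle X'_\alpha\mid\alpha<\omega_2\rangle$ to obtain $R\in[\omega_2]^{\omega_1}$ with $\omega_1\subseteq R$ and $X'_\alpha\cap[R]^\omega$ stationary in $[R]^\omega$ for every $\alpha\in R$.

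The core step is to argue $R$ is transitive. Fix $\alpha_0:=0\in R$. For any $a\in[R]^{<\omega}$, the set $\{x\in[R]^\omega:a\subseteq x\}$ is club in $[R]^\omega$ and so meets the stationary $X'_{\alpha_0}\cap[R]^\omega$; the witness $x$ lies in $D_f$, hence $f(a)\in x\subseteq R$. Thus $R$ is closed under $f$. Since $\omega_1\subseteq R$, this forces $e_\beta(\xi)\in R$ for every $\beta\in R$ and every $\xi<\omega_1$, whence $\beta=e_\beta[\omega_1]\subseteq R$. Therefore $R$ is a transitive set of ordinals, i.e.\ an ordinal $\delta$. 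The same density argument applied to the club $C$ gives $\omega_1\in R$, forcing $\delta>\omega_1$; and $|R|=\omega_1$ gives $\delta<\omega_2$. For any $\alpha<\delta=R$ we then have $\alpha\in R$, so $X_\alpha\cap[\delta]^\omega\supseteq X'_\alpha\cap[R]^\omega$ is stationary in $[\delta]^\omega=[R]^\omega$, as required.

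The main obstacle is the transitivity step: one must design $f$ so that closure under a single finitary Skolem function on an $\omega_1$-sized $R$ already containing $\omega_1$ is enough to span every $\beta<\omega_2$ that happens to belong to $R$---which is exactly what the uniform surjections $e_\beta:\omega_1\to\beta$ accomplish.
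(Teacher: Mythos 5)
Your proposal is correct and follows essentially the same route as the paper: both force the reflecting set $R$ to be transitive (hence an ordinal) by fixing surjections $\omega_1\to\beta$ and cutting the stationary sets down to a club of countable sets closed under them, the only cosmetic difference being that you package the closure as a single Skolem function while the paper uses the club $Z$ of $x$ closed under $\pi_\beta$ for $\beta\in x$ and shrinks only $X_0$. (The only nitpick is that $f(\{\beta,\xi\})$ is ambiguous when both elements are below $\omega_1$, but any fixed convention works since those values land in $\omega_1\subseteq R$ anyway.)
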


\begin{proof}
Suppose $\langle X_\alpha \mid \alpha < \omega_2 \rangle$ is a sequence of stationary
subsets of $[ \omega_2 ]^\omega$. We find $\delta$ as in the lemma.

For each $\beta < \omega_2$, take a surjection $\pi_\beta : \omega_1 \to \beta$.
Let $Z$ be the set of all $x \in [ \omega_2 ]^\omega$ such that
$x \cap \omega_1 \in \omega_1$ and $x$ is closed under $\pi_\beta$ for all $\beta \in x$.
Then, $Z$ is club in $[ \omega_2 ]^\omega$.
Moreover, it is easy to see that if $\omega_1 \subseteq R \in [ \omega_2 ]^{\omega_1}$,
and $Z \cap [R]^\omega$ is $\subseteq$-cofinal in $[R]^\omega$,
then $R \in \omega_2 \setminus \omega_1$.

By shrinking $X_0$ if necessary, we may assume that $X_0 \subseteq Z$.
By $\msf{DRP}$ at $[ \omega_2 ]^\omega$,
take $R \in [ \omega_2 ]^{\omega_1}$ including $\omega_1$
such that $X_\alpha \cap [R]^\omega$ is stationary for all $\alpha \in R$.
Then, $R \in \omega_2 \setminus \omega_1$ since $Z \cap [R]^\omega$ is $\subseteq$-cofinal
in $[R]^\omega$. So, $\delta := R$ is as desired.
\end{proof}

Next, we present our notation and basic facts about forcing.
Suppose $\mbb{P}$ is a forcing notion and $M$ is a set.
We say that $g \subseteq \mbb{P} \cap M$ is $M$-\emph{generic} if
$g \cap D \neq \emptyset$ for any dense $D \subseteq \mbb{P}$ with $D \in M$.

We will use the following well-known fact about forcing axioms:

\begin{fact}[Woodin \cite{Woodin}] \label{fact:forcing_axiom}
Let $\Gamma$ be a class of forcing notions and $\mu$ be a cardinal $\leq \omega_1$,
and assume $\msf{MA}^{+ \mu} ( \Gamma )$ holds.
Suppose $\mbb{P} \in \Gamma$ and $\langle \dot{T}_\xi \mid \xi < \mu \rangle$
is a sequence of $\mbb{P}$-names for stationary subsets of $\omega_1$.
Then, for any regular cardinal $\theta$
with $\mbb{P} \in \mcal{H}_\theta$ and any $A \in [ \mcal{H}_\theta ]^{\omega_1}$,
there are $M \in [ \mcal{H}_\theta ]^{\omega_1}$ and $g \subseteq \mbb{P} \cap M$
with the following properties.
\begin{renumerate}
\item $A \subseteq M \prec \langle \mcal{H}_\theta , \in \rangle$.
\item $g$ is an $M$-generic filter on $\mbb{P} \cap M$.
\item $\dot{T}_\xi^g$ is stationary in $\omega_1$ for any $\xi < \mu$.
\end{renumerate}
\end{fact}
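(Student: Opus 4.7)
The plan is to choose an elementary submodel $M \prec \langle \mcal{H}_\theta, \in \rangle$ of cardinality $\omega_1$ absorbing all the data, apply $\msf{MA}^{+\mu}(\Gamma)$ to an $\omega_1$-sized family of dense subsets of $\mbb{P}$ derived from $M$, and extract $g$ as the trace of the resulting filter on $\mbb{P} \cap M$.

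First, by a standard L\"{o}wenheim-Skolem argument I would obtain $M \in [\mcal{H}_\theta]^{\omega_1}$ with $M \prec \mcal{H}_\theta$, $\omega_1 \subseteq M$, and $A \cup \{\mbb{P}, \langle \dot{T}_\xi \mid \xi < \mu \rangle\} \subseteq M$. Because $|M| = \omega_1$, the collection of dense subsets of $\mbb{P}$ that belong to $M$ has size at most $\omega_1$. To this collection I would adjoin, for each $p \in \mbb{P} \cap M$, the decision dense set $F_p := \{q \in \mbb{P} \mid q \leq p \vee q \perp p\}$, and for each pair $p_1, p_2 \in \mbb{P} \cap M$, the directedness dense set $E_{p_1, p_2} := \{q \in \mbb{P} \mid (q \leq p_1 \wedge q \leq p_2) \vee q \perp p_1 \vee q \perp p_2\}$. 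Each of these lies in $M$ and is dense in $\mbb{P}$, and the augmented family still has size $\omega_1$.

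Next I would apply $\msf{MA}^{+\mu}(\Gamma)$ to $\mbb{P}$ with this augmented family and the name sequence $\langle \dot{T}_\xi \mid \xi < \mu \rangle$, producing a filter $g^\ast \subseteq \mbb{P}$ that meets every dense set in the family and renders $\dot{T}_\xi^{g^\ast}$ stationary for each $\xi < \mu$. I would then set $g := g^\ast \cap M \subseteq \mbb{P} \cap M$. Condition (i) holds by construction of $M$, and condition (iii) holds because $\dot{T}_\xi \in M$ forces the interpretations $\dot{T}_\xi^g$ and $\dot{T}_\xi^{g^\ast}$ to agree.

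The main obstacle is verifying condition (ii), that $g$ is an $M$-generic filter on $\mbb{P} \cap M$. Upward closure of $g$ in $\mbb{P} \cap M$ is automatic, and the $F_p$ dense sets force $g^\ast$ to decide each $p \in \mbb{P} \cap M$. The subtler points are directedness of $g$ and the requirement that $g$ meet each dense $D \in M$; both rest on the Woodin-style observation that, by elementarity, for each dense $D \in M$ and each $p \in \mbb{P} \cap M$ the set $D \cap M$ contains a condition extending $p$, and similarly that compatible $p_1, p_2 \in \mbb{P} \cap M$ admit common extensions inside $M$. The technical heart of the argument is to combine this elementarity with the auxiliary $F_p$ and $E_{p_1, p_2}$ dense sets to ensure that the witnesses produced by $g^\ast$ for directedness and for meeting each $D$ can be located inside $M$, so that the trace $g = g^\ast \cap M$ is indeed a genuine $M$-generic filter on $\mbb{P} \cap M$.
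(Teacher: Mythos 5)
The paper states this Fact without proof (it is cited to Woodin), so there is no in-paper argument to compare against; judged on its own terms, your proposal has a genuine gap at exactly the point you flag as ``the technical heart'': you never actually show that $g := g^* \cap M$ is an $M$-generic filter, and with $M$ chosen \emph{before} the forcing axiom is applied, this step is not merely unproved but false in general. Concretely, take $\mbb{P} = \mrm{Coll}(\omega_1,\omega_2)$ (countable partial functions $\omega_1 \to \omega_2$), which is $\sigma$-closed. Fix any $M \prec \mcal{H}_\theta$ of size $\omega_1$ with $\omega_1 \subseteq M$ and let $D := \{p \mid 17 \in \dom(p)\} \in M$. The axiom only guarantees \emph{some} filter $g^*$ meeting your family of dense sets, and nothing in that family (nor in any family of $\omega_1$ dense sets fixed before $M$ is known to interact with $g^*$) prevents $\bigl(\bigcup g^*\bigr)(17)$ from being an ordinal outside $M$. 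In that case every $p \in g^*$ with $17 \in \dom(p)$ has $p(17) \notin M$, hence $p \notin M$, so $g^* \cap M$ misses $D$ entirely. Your auxiliary sets $F_p$ and $E_{p_1,p_2}$ only produce witnesses somewhere in $\mbb{P}$, not in $M$, and the elementarity observation that $D \cap M$ is ``dense in $\mbb{P} \cap M$'' does not help you intersect it with $g^*$. The same example defeats directedness of $g^* \cap M$.

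The repair requires reversing the order of quantifiers: $M$ must be \emph{read off from the filter}, not fixed in advance. The standard argument (this is how Woodin's proof goes) forces with $\mbb{P}$ a name $\langle \dot{N}_\alpha \mid \alpha < \omega_1\rangle$ for an increasing continuous chain of countable elementary submodels of $\langle \mcal{H}_\theta, \in, \Delta\rangle$ ($\Delta$ a wellorder) such that $\dot{N}_{\alpha+1}$ contains $a_\alpha$ (where $A = \{a_\alpha \mid \alpha<\omega_1\}$) together with, for each dense $D \in \dot{N}_\alpha$, the $\Delta$-least element of $\dot{G} \cap D$. One checks that for each $\alpha$ the set of conditions deciding $\dot{N}_\alpha$ (equivalently, deciding countably many statements of the form ``the $\Delta$-least element of $\dot{G} \cap D$ is $\check{p}$'') is dense; these $\omega_1$ dense sets, the decision sets for $\check{\alpha} \in \dot{T}_\xi$, and the names $\dot{T}_\xi$ are what you feed to $\msf{MA}^{+\mu}(\Gamma)$. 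From the resulting filter $g^*$ you then \emph{define} $N_\alpha$ as the decided value of $\dot{N}_\alpha$, set $M := \bigcup_{\alpha<\omega_1} N_\alpha$, and let $g$ be the filter on $\mbb{P}\cap M$ generated by the decided witnesses; by construction the witnesses for meeting each dense $D \in M$ and for directedness now lie in $M$, and your argument for (iii) (comparing $\dot{T}_\xi^g$ with $\dot{T}_\xi^{g^*}$ via decision dense sets in $M$ and compatibility inside $g^*$) then goes through.
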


We will also use forcing notions for shooting club sets.
For an ordinal $\lambda \geq \omega_1$ and a subset $X$ of $[ \lambda ]^\omega$,
let $\mbb{R} ( X )$ denote the poset of all $\subseteq$-increasing continuous
function from some countable successor ordinal to $X$,
which is ordered by reverse inclusions.
The following is standard:

\begin{lemma} \label{lem:club_shoot_basic}
Suppose $X$ is a stationary subset of $[ \lambda ]^\omega$ for some ordinal
$\lambda \geq \omega_1$.
\begin{aenumerate}
\item A forcing extension by $\mbb{R} ( X )$ adds no new countable sequences of ordinals.
So it preserves $\omega_1$.
\item In $V^{\mbb{R} (X)}$, $X$ contains a club subset of $[ \lambda ]^\omega$.
\item In $V$, suppose $Y \subseteq X$ and $Y$ is stationary in $[ \lambda ]^\omega$.
Then $Y$ remains stationary in $V^{\mbb{R} (X)}$.
\end{aenumerate}
\end{lemma}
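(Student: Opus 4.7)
The plan is to treat (1) and (3) by essentially the same elementary-submodel argument, and to deduce (2) from (1) together with a density argument. The key subtlety is that $\mbb{R}(X)$ is not literally $\sigma$-closed: given a strictly descending chain $\langle p_n \mid n < \omega \rangle$ of conditions with $\mrm{dom}(p_n) = \alpha_n + 1$, continuity forces any lower bound to have top value $\bigcup_n p_n(\alpha_n)$, and mere stationarity of $X$ does not directly guarantee this union lies in $X$. The standard workaround is to construct the chain inside a countable elementary submodel $M$ in such a way that the top union is forced to equal $M \cap \lambda$.

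For (1), I would fix a name $\dot{f}$ for an $\omega$-sequence of ordinals together with a condition $p$, then pick a regular $\theta$ with $\mbb{R}(X), \dot{f}, p \in \mcal{H}_\theta$, and using Fact \ref{fact:stat_basic}(2) together with stationarity of $X$, obtain a countable $M \prec \mcal{H}_\theta$ containing $\mbb{R}(X), \dot{f}, p$ with $M \cap \lambda \in X$. Enumerate the dense subsets of $\mbb{R}(X)$ lying in $M$ as $\langle D_n \rangle$ and enumerate $M \cap \lambda$ as $\langle \gamma_n \rangle$. Working inside $M$, build a descending chain $p = p_0 \geq p_1 \geq \cdots$ with $p_{n+1} \in D_n$, $p_{n+1}$ deciding $\dot{f}(n)$, and $\gamma_n$ appearing in the top value of $p_{n+1}$. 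Elementarity gives $p_n \in M$, so $p_n(\alpha_n) \subseteq M \cap \lambda$; the enumeration demand forces $\bigcup_n p_n(\alpha_n) = M \cap \lambda$. Adjoining a top step $(\alpha_\infty, M \cap \lambda)$ with $\alpha_\infty = \sup_n \alpha_n$ then produces a valid lower bound $q \in \mbb{R}(X)$ that decides all of $\dot{f}$, so $\mbb{R}(X)$ adds no new $\omega$-sequence of ordinals and in particular preserves $\omega_1$.

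For (2), standard density arguments show that for every $\xi < \omega_1$ the set of conditions with domain exceeding $\xi$ is dense (the limit stages of such a transfinite extension argument reuse the lower-bound construction from (1)), and that for every $\gamma < \lambda$ the set of conditions whose range contains $\gamma$ in some $p(\alpha)$ is dense. Hence in $V^{\mbb{R}(X)}$, $\bigcup G$ is a $\subseteq$-increasing continuous function from $\omega_1$ into $X$ whose range is $\subseteq$-cofinal in $[\lambda]^\omega$, and that range is a club subset of $[\lambda]^\omega$ contained in $X$. For (3), I would repeat the construction of (1), except that $M$ is chosen with $M \cap \lambda \in Y$ (possible since $Y$ is stationary), and the descending chain is used to decide $\dot{F}(a)$ for each $a$ in an enumeration of $[M \cap \lambda]^{<\omega}$, where $\dot{F}$ names an arbitrary function $[\lambda]^{<\omega} \to \lambda$ in the extension. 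Because $[M \cap \lambda]^{<\omega} \subseteq M$ (finite subsets of $M$ lie in $M$) and each decided value lies in $M$, the resulting lower bound with top $M \cap \lambda \in Y$ forces that $M \cap \lambda$ is closed under $\dot{F}$, showing via Fact \ref{fact:stat_basic}(1) that $Y$ remains stationary.

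The main obstacle throughout is arranging $\bigcup_n p_n(\alpha_n) = M \cap \lambda$ exactly, so that the top step of the assembled lower bound genuinely lands in $X$ (for (1)) or $Y$ (for (3)); this is precisely where the elementary-submodel enumeration trick is essential. Once it is in place, all three parts fall out uniformly.
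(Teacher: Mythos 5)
Your proposal is correct and follows essentially the same route as the paper: both handle (1) and (3) by building an $M$-generic descending chain inside a countable $M \prec \mcal{H}_\theta$ with $M \cap \lambda \in X$ (resp.\ $Y$), using the density of the sets $\{ r \mid \exists \xi,\ r(\xi) \supseteq x \}$ to force the union of the top values to equal $M \cap \lambda$, and then adjoin that set as a final top step; (2) is deduced from (1) and the same density facts. The only cosmetic difference is that you enumerate $M \cap \lambda$ explicitly rather than folding that requirement into $M$-genericity, which amounts to the same thing.
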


\begin{proof}
Let $\mbb{R}$ denote $\mbb{R} (X)$.
Before starting, note that the set
$\{ r \in \mbb{R} \mid \exists \xi \in \dom (r) , \  r( \xi ) \supseteq x \}$
is dense in $\mbb{R}$ for any $x \in [ \lambda ]^\omega$,
since $X$ is $\subseteq$-cofinal in $[ \lambda ]^\omega$.

First, we prove (1) and (3). We work in $V$.
Suppose $r \in \mbb{R}$, $\mcal{D}$ is a countable family of dense open subsets of $\mbb{R}$
and $\dot{F}$ is an $\mbb{R}$-name for a function from $[ \lambda ]^{< \omega}$ to $\lambda$.
It suffices to find $r^* \leq r$ and $y \in Y$ such that
$r^* \in \bigcap \mcal{D}$ and $r^*$ forces $y$ to be closed under $\dot{F}$.

Take a sufficiently large regular cardinal $\theta$.
Since $Y$ is stationary, there is a countable $M \prec \langle \mcal{H}_\theta , \in \rangle$
such that $\{ \lambda , X , r , \dot{F} \} \cup \mcal{D} \subseteq M$ and
$y := M \cap \lambda \in Y$.
Then, we can construct a descending sequence
$\langle r_n \mid n < \omega \rangle$ in $\mbb{R} \cap M$ such that
$r_0 = r$ and $\{ r_n \mid n < \omega \}$ is $M$-generic.
Note that any lower bound of $\{ r_n \mid n < \omega \}$ forces $y$ to be closed
under $\dot{F}$ by the $M$-genericity of $\{ r_n \mid n < \omega \}$.

Let $r' := \bigcup_{n < \omega} r_n$ and $\zeta : = \dom ( r' )$.
Then, using the fact mentioned at the beginning,
it is easy to check that $\zeta$ is a limit ordinal and
$\bigcup_{\xi < \zeta} r' ( \xi ) = y$.
Let $r^*$ be an extension of $r'$ such that
$\dom ( r^* ) = \zeta + 1$ and $r^* ( \zeta ) = y$.
Then $r^* \in \mbb{R}$, and $r^*$ is a lower bound of $\{ r_n \mid n < \omega \}$.
So $r^*$ and $y$ are as desired.

Next, we check (2). By (1), the definition of $\mbb{R}$ and the fact mentioned at the beginning,
if $G$ is an $\mbb{R}$-generic filter over $V$, then $\mrm{range} ( \bigcup G )$
is a club subset of $[ \lambda ]^\omega$ consisting of elements of $X$.
So (2) holds.
\end{proof}

%%%%%%%%%%%%%%%%%%%%%%%%%%%%%%%%%%%%%%%%%%%%%%%%%%%%%%%%%%%%
%%%%%%%%%%%%%%%%%%%%%%%%%%%%%%%%%%%%%%%%%%%%%%%%%%%%%%%%%%%%

\section{Proof of Main Theorem} \label{sec:proof}

Here we prove the Main Theorem.
Throughout this section, assume that $\msf{MM}^{+ \omega}$ holds in the ground model $V$.

We construct a forcing notion which preserves $\msf{MM}^{+ \omega}$
and adds a counter-example $\langle X_\alpha \mid \alpha < \omega_2 \rangle$ of the consequence
of Lemma \ref{lem:DRP_easy}.
Here recall that $\msf{MM}$ implies $\msf{wDRP}$.
So we must arrange our forcing notion so that each $X_\alpha$ is not projectively stationary.
For some technical reason, we also make $\langle X_\alpha \mid \alpha < \omega_2 \rangle$
pairwise disjoint.

Recall the fact, due to Foreman-Magidor-Shelah \cite{FMS}, that $\msf{MM}$ implies
$2^{\omega_1} = \omega_2$. In $V$, fix an enumeration $\langle S_\alpha \mid \alpha < \omega_2 \rangle$
of all stationary subsets of $\omega_1$.
Let $\mbb{P}$ be the following forcing notion:
\begin{itemize}
\item $\mbb{P}$ consists of all functions $p$ such that
\begin{renumerate}
\item $p : \delta_p \times [ \delta_p ]^\omega \to 2$ for some $\delta_p < \omega_2$,
\item for any $\alpha < \delta_p$,
$X_{p,\alpha} := \{ x \in [ \delta_p ]^\omega \mid p( \alpha , x ) = 1 \}$
has size $\leq \omega_1$,
\item $x \cap \omega_1 \in S_\alpha$ for any $\alpha < \delta_p$ and any $x \in X_{p, \alpha}$,
\item $X_{p, \alpha} \cap X_{p , \beta} = \emptyset$ for any distinct $\alpha , \beta < \delta_p$,
\item for any $\delta \in \delta_p + 1 \setminus \omega_1$, there is $\alpha < \delta$ with
$X_{p , \alpha} \cap [ \delta ]^\omega$ non-stationary in $[ \delta ]^\omega$.
\end{renumerate}
\item $p \leq p'$ in $\mbb{P}$ if $p \supseteq p'$.
\end{itemize}

We observe basic properties of $\mbb{P}$.
Note that a forcing extension by $\mbb{P}$ preserves all cardinals by (1) and (3) of the
following lemma.

\begin{lemma} \label{lem:P_basic}
\begin{aenumerate}
\item $| \mbb{P} | = \omega_2$.
\item $\mbb{P}$ is $\sigma$-closed.
\item A forcing extension by $\mbb{P}$ adds no new sequences of ordinals of length $\omega_1$.
\item For any $p \in \mbb{P}$ and any $\delta < \omega_2$, there is $p' \leq p$ with
$\delta \leq \delta_{p'}$.
\end{aenumerate}
\end{lemma}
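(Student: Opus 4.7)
My plan is to prove the four clauses in the order (1), (4), (2), (3), since (4) feeds into the recursion for (3) and the $\sigma$-closure argument from (2) is reused at countable limit stages in (3).

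For (1), I will use the consequence $2^{\omega_1} = \omega_2$ of $\msf{MM}$: a condition $p$ is determined by $\delta_p < \omega_2$ and the sequence $\langle X_{p,\alpha} \mid \alpha < \delta_p \rangle$ with each $X_{p,\alpha} \in [[\delta_p]^\omega]^{\leq \omega_1}$, so a straightforward count yields $|\mbb{P}| \leq \omega_2$, and the reverse inequality is clear. For (4), given $p \in \mbb{P}$ and $\delta > \delta_p$, I will pad the domain up to $\delta \times [\delta]^\omega$ by assigning $0$ to every new pair. Conditions (i)--(iv) are then immediate, and condition (v) at any $\delta' \in (\delta_p, \delta] \setminus \omega_1$ follows by picking any $\alpha < \delta_p$ (assume $\delta_p > 0$; the remaining cases reduce to (v) of $p$ or vacuity): we have $X_{p',\alpha} = X_{p,\alpha} \subseteq [\delta_p]^\omega$, which is disjoint from the club $\{x \in [\delta']^\omega \mid \delta_p \in x\}$, hence non-stationary.

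For (2), given a descending $\langle p_n \mid n < \omega \rangle$, set $\delta^* = \sup_n \delta_{p_n}$ and define $p^*$ on $\delta^* \times [\delta^*]^\omega$ by $p^* \supseteq \bigcup_n p_n$ with value $0$ on the remaining pairs. Conditions (i)--(iv) are routine. Condition (v) at any $\delta' < \delta^*$ in $\delta^* + 1 \setminus \omega_1$ follows from (v) of some $p_n$ with $\delta' \leq \delta_{p_n}$, using $X_{p^*, \alpha} \cap [\delta']^\omega = X_{p_n, \alpha} \cap [\delta']^\omega$. The interesting case is $\delta' = \delta^* > \omega_1$: since $\delta^*$ is a countable sup, $\cof(\delta^*) = \omega$, so any $\alpha < \delta^*$ works, because $X_{p^*, \alpha} = \bigcup_n X_{p_n, \alpha}$ consists of sets each bounded in $\delta^*$ and is hence disjoint from the club $\{x \in [\delta^*]^\omega \mid \sup x = \delta^*\}$.

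For (3), the plan is to extend the $\sigma$-closure argument from (2) to an $\omega_1$-step recursion, using a \emph{reserved column} to secure condition (v) at the eventual limit. Fix a $\mbb{P}$-name $\dot f$ for an $\omega_1$-sequence of ordinals and $p \in \mbb{P}$. Using (4), extend $p$ so that $\delta_p > \omega_1$ and reserve $\alpha_0 = 0$. I will recursively build a descending $\langle p_\xi \mid \xi < \omega_1 \rangle$ in $\mbb{P}$ with $p_0 = p$, $p_{\xi+1}$ deciding $\dot f(\xi)$, $\delta_{p_\xi}$ pushed up (via (4)) toward a fixed $\delta^* < \omega_2$ of cofinality $\omega_1$, and the invariant $X_{p_\xi, \alpha_0} = X_{p, \alpha_0}$ maintained throughout; at countable limits I use (2), whose lower-bound construction only adds $0$'s and so automatically preserves the reserved column. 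Setting $p' = \bigcup_\xi p_\xi$ extended by $0$'s to $\delta^* \times [\delta^*]^\omega$, every countable subset of $\delta^*$ lies in some $[\delta_{p_\xi}]^\omega$ (since $\cof(\delta^*) = \omega_1$), so $p'$ is well-defined. Properties (i)--(iv) are routine, (v) at $\delta' < \delta^*$ works as in (2), and (v) at $\delta^*$ itself is secured by the reserved column: $X_{p', \alpha_0} = X_{p, \alpha_0} \subseteq [\delta_p]^\omega$ is disjoint from the club $\{x \in [\delta^*]^\omega \mid \delta_p \in x\}$. Since $p' \leq p_{\xi+1}$ decides $\dot f(\xi)$ for every $\xi < \omega_1$, we conclude $p' \Vdash \dot f \in V$.

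The main obstacle is the successor step of the recursion in (3): at stage $\xi+1$ I need $p_{\xi+1} \leq p_\xi$ deciding $\dot f(\xi)$ while maintaining $X_{p_{\xi+1}, \alpha_0} = X_{p, \alpha_0}$. Plain density in $\mbb{P}$ gives a candidate $q \leq p_\xi$ that decides $\dot f(\xi)$ but may enlarge column $\alpha_0$, and zeroing out the new column-$\alpha_0$ values can destroy the decision. The natural setting for resolving this is to frame the recursion inside an elementary submodel $M$ of size $\omega_1$ with $\omega_1 \subseteq M$ and $p, \dot f \in M$, and to invoke Fact~\ref{fact:forcing_axiom} (available because $\mbb{P}$ is $\sigma$-closed and $\msf{MM}^{+\omega}$ implies $\msf{MA}^{+\omega}(\sigma\text{-closed})$) to obtain the $M$-generic filter needed; the technical heart of the argument is showing that the reserved-column invariant can be preserved throughout, after which the master condition $p' = \bigcup g$ (extended by $0$'s) automatically lies in $\mbb{P}$ and forces $\dot f \in V$.
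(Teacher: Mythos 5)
Your treatments of (1), (4) and (2) are correct and essentially identical to the paper's (same order, same zero-padding, same clubs $[\gamma]^\omega\setminus[\delta_p]^\omega$ and $\{x \mid \sup x=\delta^*\}$). The problem is clause (3), where your argument has a genuine gap that you yourself flag but do not close. Your plan rests on the ``reserved column'' invariant $X_{p_\xi,\alpha_0}=X_{p,\alpha_0}$, and as you observe, there is no reason a condition $p_{\xi+1}\leq p_\xi$ deciding $\dot f(\xi)$ (or meeting a given dense set) can be chosen to respect that invariant: density only gives you \emph{some} extension, and it may put new sets into column $\alpha_0$. Your proposed escape --- wrapping the recursion in an elementary submodel and invoking Fact~\ref{fact:forcing_axiom} --- does not resolve this: an $M$-generic filter is built by exactly the same kind of successor steps and will just as readily contain conditions enlarging column $\alpha_0$, so the ``technical heart'' you defer is still missing. (It is also a disproportionate tool: one wants the distributivity of $\mbb{P}$ as a basic structural fact, not as a consequence of a forcing axiom applied to $\mbb{P}$ itself.)

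The paper's fix is different and simpler: it does not try to freeze column $0$ at all. One builds the descending sequence freely at successor stages (meeting the dense sets), and at each limit $\xi$ takes the lower bound from (2), which assigns $0$ to every pair $(\alpha,x)$ with $x\in[\delta_\xi]^\omega\setminus\bigcup_{\eta<\xi}[\delta_\eta]^\omega$. Then
\[
Z:=\{x\in[\delta^*]^\omega \mid x\subseteq\delta_\xi=\sup(x)\ \text{for some limit }\xi<\omega_1\}
\]
is club in $[\delta^*]^\omega$ (the $\delta_\xi$ for limit $\xi$ form an $\omega_1$-club in $\delta^*$), and $X_{p^*,0}\cap Z=\emptyset$: any $x\in Z$ first enters the domain at the limit stage $\xi$ with $\sup(x)=\delta_\xi$, where it is assigned $0$, while anything placed in column $0$ at a successor stage $\xi+1$ is either contained in some $\delta_\eta$ with $\eta\leq\xi$ or has supremum in $(\delta_\xi,\delta_{\xi+1}]$, hence is not of the form required for membership in $Z$. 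So condition (v) at $\delta^*$ is witnessed by $\alpha=0$ with no invariant maintained along the way. I recommend replacing your reserved-column scheme with this argument; the rest of your write-up then goes through.
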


\begin{proof}
(1) This is clear from the definition of $\mbb{P}$, especially the property (ii) of its conditions,
and the fact that $2^{\omega_1} = \omega_2$ in $V$.

\medskip

\noindent
(4) Suppose $p \in \mbb{P}$ and $\delta < \omega_2$.
We may assume $\delta_p \leq \delta$.
Let $p' : \delta \times [ \delta ]^\omega \to 2$ be an extension of $p'$ such that
$p' ( \alpha , x ) = 0$ for all $\langle \alpha , x \rangle \notin \delta_p \times [ \delta_p ]^\omega$.
It suffices to prove that $p' \in \mbb{P}$. We only check that $p'$
satisfies the property (v) of conditions of $\mbb{P}$. The other properties are easily checked.

Take an arbitrary $\gamma \in \delta + 1 \setminus \omega_1$.
We find $\alpha < \delta$ with $X_{p' , \alpha} \cap [ \gamma ]^\omega$ is non-stationary.
If $\gamma \leq \delta_p$, then we can find such $\alpha$ since
$p \in \mbb{P}$ and $p \subseteq p'$.
Suppose $\gamma > \delta_p$. Then $Z := [ \gamma ]^\omega \setminus [ \delta_p ]^\omega$ is
club in $[ \gamma ]^\omega$, and $X_{p' , \alpha} \cap Z = \emptyset$ for any $\alpha < \gamma$.
So any $\alpha < \gamma$ is as desired in this case.

\medskip

\noindent
(2) Suppose $\langle p_n \mid n < \omega \rangle$ is a descending sequence in $\mbb{P}$.
We find a lower bound $p^*$ of $\{ p_n \mid n < \omega \}$ in $\mbb{P}$.
We may assume that $\langle p_n \mid n < \omega \rangle$ is not eventually constant.

Let $\delta_n := \delta_{p_n}$ for each $n < \omega$.
Let $\delta^* := \bigcup_{n < \omega} \delta_n$,
and let $p^* : \delta^* \times [ \delta^* ]^\omega \to 2$ be an extension of
$\bigcup_{n \in \omega} p_n$ such that
$p^* ( \alpha , x ) = 0$ for all $\alpha < \delta^*$ and
all $x \in [ \delta^* ]^\omega \setminus \bigcup_{n \in \omega} [ \delta_n ]^\omega$.
Note that $X_{p^* , \alpha}$ is non-stationary in $[ \delta^* ]^\omega$ for any $\alpha < \delta^*$
since $Z := [ \delta^* ]^\omega \setminus \bigcup_{n < \omega} [ \delta_n ]^\omega$
is club in $[ \delta^* ]^\omega$ and $X_{p^* , \alpha} \cap Z = \emptyset$.
Then it is easy to see that $p^*$ is as desired.

\medskip

\noindent
(3) Suppose $p \in \mbb{P}$ and $\langle D_\xi \mid \xi < \omega_1 \rangle$
is a sequence of dense open subsets of $\mbb{P}$.
It suffices to find $p^* \leq p$ with $p^* \in \bigcap_{\xi < \omega_1} D_\xi$.

We recursively construct a strictly descending sequence $\langle p_\xi \mid \xi < \omega_1 \rangle$
in $\mbb{P}$ as follows. For each $\xi < \omega_1$, we let $\delta_\xi$ denote
$\delta_{p_\xi}$. First, let $p_0 := p$.
If $p_\xi$ has been taken, then take $p_{\xi + 1} < p_\xi$ with $p_{\xi + 1} \in D_\xi$.
Suppose $\xi$ is a limit ordinal $< \omega_1$ and $\langle p_\eta \mid \eta < \xi \rangle$
has been constructed. Then define $p_\xi$ as in the proof of (2). That is,
let $\delta_\xi := \bigcup_{\eta < \xi} \delta_\eta$, and let
$p_\xi : \delta_\xi \times [ \delta_\xi ]^\omega \to 2$ be an extension of
$\bigcup_{\eta < \xi} p_\eta$ such that
$p_\xi ( \alpha , x ) = 0$ for all $\alpha < \delta_\xi$ and all
$x \in [ \delta_\xi ]^\omega \setminus \bigcup_{\eta < \xi} [ \delta_\eta ]^\omega$.
Then $p_\xi$ is a lower bound of $\{ p_\eta \mid \eta < \xi \}$ in $\mbb{P}$.

We have constructed $\langle p_\xi \mid \xi < \omega_1 \rangle$.
Let $\delta^* := \sup_{\xi < \omega_1} \delta_\xi$ and $p^* := \bigcup_{\xi < \omega_1} p_\xi$.
Here note that $[ \delta^* ]^\omega = \bigcup_{\xi < \omega_1} [ \delta_\xi ]^\omega$.
So $p^* : \delta^* \times [ \delta^* ]^\omega \to 2$.
Note also that $X_{p^* , 0}$ is non-stationary in $[ \delta^* ]^\omega$
since
\[
Z := \{ x \in [ \delta^* ]^\omega \mid
\mbox{$x \subseteq \delta_\xi = \sup (x)$ for some limit $\xi < \omega_1$} \}
\]
is club in $[ \delta^* ]^\omega$ and that $X_{p^* , 0} \cap Z = \emptyset$
by the construction of $p_\xi$ for a limit $\xi < \omega_1$.
Then, it is easy to check that $p^*$ is as desired.
\end{proof}

Let $\dot{G}$ be the canonical $\mbb{P}$-name for a $\mbb{P}$-generic filter.
For $\alpha < \omega_2$, let $\dot{X}_\alpha$ be the $\mbb{P}$-name
for the set
\[
\{ x \in [ \omega_2 ]^\omega \mid \exists p \in \dot{G} , \ p( \alpha , x ) = 1 \} \, .
\]

\begin{lemma} \label{lem:X_alpha_stationary}
For each $\alpha < \omega_2$,
$\dot{X}_\alpha$ is stationary in $[ \omega_2 ]^\omega$ in $V^\mbb{P}$.
\end{lemma}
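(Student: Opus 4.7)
The plan is to apply Fact~\ref{fact:stat_basic}(1): given an arbitrary $p_0 \in \mbb{P}$ and a $\mbb{P}$-name $\dot{F}$ for a function $[ \omega_2 ]^{< \omega} \to \omega_2$, I will find $p^* \leq p_0$ and $x \in [ \omega_2 ]^\omega$ with $p^* ( \alpha , x ) = 1$ such that $p^*$ forces $x$ to be closed under $\dot{F}$. Using Lemma~\ref{lem:P_basic}(4), I may assume $\alpha < \delta_{p_0}$. Take a sufficiently large regular $\theta$ and a countable $M \prec \langle \mcal{H}_\theta , \in \rangle$ containing $\mbb{P}, p_0, \alpha, \dot{F}, \langle S_\beta \mid \beta < \omega_2 \rangle$, additionally arranging that $M \cap \omega_1 \in S_\alpha$ (available since $S_\alpha$ is stationary) and that $M \cap \omega_2$ has no maximum (by a standard union-of-elementary-chains construction).

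Using $\sigma$-closure (Lemma~\ref{lem:P_basic}(2)) together with Lemma~\ref{lem:P_basic}(4), I then build a strictly descending $M$-generic sequence $\langle p_n \mid n < \omega \rangle$ in $\mbb{P} \cap M$ below $p_0$ with $\delta_{p_n}$ cofinal in $M \cap \omega_2$. Let $\delta^* := \sup_n \delta_{p_n} = \sup ( M \cap \omega_2 )$ and $x := M \cap \omega_2$; since $M \cap \omega_2$ has no maximum, $\delta^* \notin x$, so $x \in [ \delta^* ]^\omega$. I then define $p^* : \delta^* \times [ \delta^* ]^\omega \to 2$ to be the extension of $\bigcup_n p_n$ determined by $p^* ( \alpha , x ) := 1$ and $p^* ( \beta , y ) := 0$ at every remaining pair.

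That $p^*$ forces $x$ to be closed under $\dot{F}$ follows by the same standard $M$-generic argument as in the proof of Lemma~\ref{lem:club_shoot_basic}: for each $a \in [x]^{<\omega}$ one has $a \in M$, and the name $\dot{F}(a) \in M$ is forced by some $p_n$ to take a value in $M \cap \omega_2 = x$. The real task is to verify $p^* \in \mbb{P}$. Clauses (i)--(iii) are immediate, (iii) using $x \cap \omega_1 = M \cap \omega_1 \in S_\alpha$. Disjointness (iv) is automatic because $\delta_{p_n}$ is strictly cofinal in $x$, so $x \not\subseteq \delta_{p_n}$ for any $n$, meaning $x \notin [ \delta_{p_n} ]^\omega$; hence the only pair of the form $( \beta , x )$ on which $p^*$ takes the value $1$ is $( \alpha , x )$ itself.

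The main obstacle is clause (v) at $\delta = \delta^*$. The key observation is that
\[
Z \ := \ [ \delta^* ]^\omega \setminus \bigcup_{n < \omega} [ \delta_{p_n} ]^\omega
\]
is club in $[ \delta^* ]^\omega$: closure under $\subseteq$-increasing unions is immediate, and cofinality holds because $\delta_{p_n}$ increases to $\delta^*$, so any countable $y_0 \subseteq \delta^*$ can be augmented by picking one ordinal from $( \delta_{p_n} , \delta^* )$ for each $n$. Now for any $\beta < \delta^*$ with $\beta \neq \alpha$, by construction $X_{p^* , \beta} \subseteq \bigcup_n [ \delta_{p_n} ]^\omega$, whence $X_{p^* , \beta} \cap Z = \emptyset$ and $X_{p^* , \beta}$ is non-stationary in $[ \delta^* ]^\omega$; since $\alpha < \delta^*$, such a $\beta$ exists and witnesses (v) at $\delta^*$. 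For $\delta \in \delta^* \setminus \omega_1$, clause (v) transfers directly from $p_n$ to $p^*$ for any $n$ with $\delta \leq \delta_{p_n}$, since $p^*$ agrees with $p_n$ on $\delta \times [ \delta ]^\omega$.
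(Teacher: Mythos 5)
Your proof is correct and follows essentially the same route as the paper's: choose a countable $M \prec \mcal{H}_\theta$ with $M \cap \omega_1 \in S_\alpha$, build an $M$-generic descending sequence, and extend its union to $p^*$ by setting $p^*(\alpha, M \cap \omega_2) = 1$ and $0$ elsewhere, using the club $[\delta^*]^\omega \setminus \bigcup_n [\delta_{p_n}]^\omega$ to verify clause (v). You merely spell out the verifications the paper leaves as ``easy to check'' (and you correctly state the requirement $M \cap \omega_1 \in S_\alpha$, which the paper's proof garbles as ``$M \cap \omega_1 = \alpha$'').
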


\begin{proof}
We work in $V$.
Take an arbitrary $\alpha < \omega_2$.
Suppose $p \in \mbb{P}$ and $\dot{F}$ is a $\mbb{P}$-name for a function
from $[ \omega_2 ]^{< \omega}$ to $\omega_2$.
It suffices to find $p^* \leq p$ and $x \in [ \omega_2 ]^{< \omega}$ such that
$p^* \Vdash_\mbb{P} \lchon\, x \in \dot{X}_\alpha \wedge
\mbox{$x$ is closed under $\dot{F}$} \,\rchon$.

Take a sufficiently large regular cardinal $\theta$
and a countable $M \prec \langle \mcal{H}_\theta , \in \rangle$
such that $\alpha , \mbb{P} , p , \dot{F} \in M$ and $M \cap \omega_1 = \alpha$.
Let $x := M \cap \omega_2$.
We can take a descending sequence $\langle p_n \mid n < \omega \rangle$
in $\mbb{P} \cap M$ such that $p_0 = p$ and $\{ p_n \mid n < \omega \}$ is $M$-generic.
Note that any lower bound of $\{ p_n \mid n < \omega \}$ forces $x$ to be closed under $\dot{F}$
by the $M$-genericity.
For each $n < \omega$, let $\delta_n := \delta_{p_n}$.
Note that $\delta_n \in M \cap \omega_2$ for each $n < \omega$ and that
$\delta^* := \sup_{n < \omega} \delta_n = \sup ( M \cap \omega_2 )$ by Lemma \ref{lem:P_basic} (4).

Let $p^* : \delta^* \times [ \delta^* ]^\omega \to 2$
be an extension of $\bigcup_{n < \omega} p_n$ such that
$p^* ( \alpha , x ) = 1$ and $p^* ( \beta , y ) = 0$ for any $\beta < \delta^*$ and
any $y \in [ \delta^* ]^\omega \setminus \bigcup_{n < \omega} [ \delta_n ]^\omega$
with $\langle \beta , y \rangle \neq \langle \alpha , x \rangle$.
Then, it is easy to check that $p^*$ and $x$ are as desired.
\end{proof}

The following is immediate
from Lemma \ref{lem:DRP_easy}, \ref{lem:P_basic}, \ref{lem:X_alpha_stationary}
and the property (v) of conditions of $\mbb{P}$:

\begin{cor} \label{cor:DRP_fail}
$\msf{DRP}$ at $\omega_2$ fails in $V^\mbb{P}$.
\end{cor}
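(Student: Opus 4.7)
The plan is to argue by contradiction, combining all the ingredients in the obvious way.

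First, I would suppose $\msf{DRP}$ at $[\omega_2]^\omega$ holds in $V^\mbb{P}$. By Lemma \ref{lem:P_basic} (1) and (3), $\mbb{P}$ is $\omega_2$-cc and adds no new $\omega_1$-sequences of ordinals, so $\omega_1$ and $\omega_2$ are preserved and moreover $[\delta]^\omega$ is the same in $V$ and $V^\mbb{P}$ for every $\delta < \omega_2$. By Lemma \ref{lem:X_alpha_stationary}, the sequence $\langle \dot X_\alpha^G \mid \alpha<\omega_2 \rangle$ is a sequence of stationary subsets of $[\omega_2]^\omega$ in $V^\mbb{P}$. So by Lemma \ref{lem:DRP_easy} applied inside $V^\mbb{P}$, I get $\delta \in \omega_2 \setminus \omega_1$ such that $\dot X_\alpha^G \cap [\delta]^\omega$ is stationary in $[\delta]^\omega$ (in $V^\mbb{P}$) for every $\alpha < \delta$.

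Next I would connect the generic object to the conditions. Using Lemma \ref{lem:P_basic} (4) and a genericity argument, pick $p^* \in G$ with $\delta_{p^*} \geq \delta$. The key observation is that for every $\alpha < \delta$,
\[
\dot X_\alpha^G \cap [\delta]^\omega \;=\; X_{p^*,\alpha} \cap [\delta]^\omega,
\]
because any $x \in [\delta]^\omega$ lies in $\dom(p^*)$'s second coordinate, so if some $q \in G$ forces $x \in \dot X_\alpha$ then a common extension of $p^*$ and $q$ in $G$ forces it, and that common extension agrees with $p^*$ on $\langle \alpha,x \rangle$.

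Now the contradiction comes from clause (v) in the definition of $\mbb{P}$: since $\delta \in \delta_{p^*}+1 \setminus \omega_1$, there is some $\alpha < \delta$ such that $X_{p^*,\alpha} \cap [\delta]^\omega$ is non-stationary in $[\delta]^\omega$ in $V$. Combined with the displayed identification, $\dot X_\alpha^G \cap [\delta]^\omega$ is non-stationary in $V$ for this $\alpha$. Transferring this back to $V^\mbb{P}$ is the only step that needs a moment's thought, and it uses $\sigma$-closure again: a witnessing function $F\colon [\delta]^{<\omega}\to\delta$ in $V$ remains a witness in $V^\mbb{P}$ because no new countable subsets of $\delta$ have been added, so every countable $x$ closed under $F$ in $V^\mbb{P}$ was already in $V$ and hence outside the set. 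This contradicts the conclusion of Lemma \ref{lem:DRP_easy}.

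The main (and really only) obstacle is bookkeeping the absoluteness between $V$ and $V^\mbb{P}$: making sure that the non-stationarity supplied by clause (v), which is computed in $V$, is the same non-stationarity that contradicts the diagonal reflection statement, which is computed in $V^\mbb{P}$. Both the equality $\dot X_\alpha^G \cap [\delta]^\omega = X_{p^*,\alpha}\cap[\delta]^\omega$ and the preservation of non-stationarity in $[\delta]^\omega$ rest squarely on $\sigma$-closure of $\mbb{P}$, so once Lemma \ref{lem:P_basic} is in hand there is nothing further to do.
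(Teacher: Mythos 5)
Your proof is correct and takes essentially the same route as the paper, which simply declares the corollary immediate from Lemmas \ref{lem:DRP_easy}, \ref{lem:P_basic}, \ref{lem:X_alpha_stationary} and clause (v); you have filled in exactly the intended details (passing to $p^* \in G$ with $\delta_{p^*} \geq \delta$, identifying $\dot{X}_\alpha^G \cap [\delta]^\omega$ with $X_{p^*,\alpha} \cap [\delta]^\omega$, and using absoluteness of $[\delta]^\omega$ to transfer the non-stationarity from clause (v)). One harmless slip: $|\mbb{P}| = \omega_2$ gives the $\omega_3$-c.c., not the $\omega_2$-c.c., but this is irrelevant since preservation of $\omega_1$ and $\omega_2$ already follows from Lemma \ref{lem:P_basic} (3).
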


We must show that $\mbb{P}$ preserves $\msf{MM}^{+ \omega}$.
The following lemma is a key:

\begin{lemma} \label{lem:triple_stat_pres}
Let $\dot{\mbb{Q}}$ be a $\mbb{P}$-name for an $\omega_1$-stationary preserving
forcing notion and $\langle \dot{T}_n \mid n < \omega \rangle$ be a sequence
of $\mbb{P} * \dot{\mbb{Q}}$-names for stationary subsets of $\omega_1$.
Then there is a $\mbb{P} * \dot{\mbb{Q}}$-name $\dot{\gamma}$ of an ordinal $< \omega_2^V$
such that if we let
\[
\mbb{S} :=
\mbb{P} * \dot{\mbb{Q}} * \mbb{R} ( [ \omega_2^V ]^\omega \setminus \dot{X}_{\dot{\gamma}} )
\, ,
\]
then all elements of $\{ S_\alpha \mid \alpha < \omega_2^V \} \cup \{ \dot{T}_n \mid n < \omega \}$
remain stationary in $V^\mbb{S}$.
\end{lemma}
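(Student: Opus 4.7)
The plan is to produce a $\mbb{P} * \dot{\mbb{Q}}$-name $\dot\gamma$ for an ordinal $<\omega_2^V$ such that the club-shooting factor $\mbb{R}([\omega_2^V]^\omega \setminus \dot X_{\dot\gamma})$ preserves the stationarity of every member of the family $\mcal F := \{S_\alpha \mid \alpha < \omega_2^V\} \cup \{\dot T_n \mid n < \omega\}$. My first step is a reduction: writing $\tilde F := \{x \in [\omega_2^V]^\omega \mid x \cap \omega_1 \in F\}$, Lemma 2.4(1) says $\mbb R$ adds no countable sequences, so by Fact 2.1(2) the stationarity of $F \subseteq \omega_1$ in $V^{\mbb S}$ is equivalent to stationarity of $\tilde F$ in $V^{\mbb S}$; by Lemma 2.4(3) applied with $Y := \tilde F \setminus \dot X_{\dot\gamma}$, the latter follows provided $\tilde F \setminus \dot X_{\dot\gamma}$ is already stationary in $V^{\mbb P * \dot{\mbb Q}}$, while the converse holds since the $\mbb R$-generic club lies inside $[\omega_2^V]^\omega \setminus \dot X_{\dot\gamma}$. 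Thus the task reduces to arranging that $\tilde F \setminus \dot X_{\dot\gamma}$ is stationary in $V^{\mbb P * \dot{\mbb Q}}$ for every $F \in \mcal F$.

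A counting bound from property (iv) of $\mbb P$ is crucial: for any stationary $F \subseteq \omega_1$, at most one ordinal $\gamma < \omega_2^V$ can have $\tilde F \setminus \dot X_\gamma$ non-stationary, since two such $\gamma, \gamma'$ would put $\tilde F$ (modulo clubs) into $\dot X_\gamma \cap \dot X_{\gamma'} = \emptyset$, contradicting stationarity. Hence the ``bad set''
\[
B := \{\gamma < \omega_2^V \mid \tilde F \setminus \dot X_\gamma \text{ is non-stationary for some } F \in \mcal F\}
\]
has cardinality at most $|\mcal F| \leq \omega_2^V$ in $V^{\mbb P * \dot{\mbb Q}}$. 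If $B$ is a proper subset of $\omega_2^V$ in every generic extension, I take $\dot\gamma$ to be a name for $\min(\omega_2^V \setminus B)$.

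The principal obstacle is verifying $B \neq \omega_2^V$. My approach is a density argument over $\mbb P * \dot{\mbb Q}$ modeled on the proof of Lemma 3.3: given a condition, an ordinal $\gamma$, and some $F \in \mcal F$, take a countable $M \prec \mcal H_\theta$ containing the relevant data with $M \cap \omega_1 \in F$, build an $M$-generic descending chain of $\mbb P$-conditions, and close it off with an explicit extension $p^*$ setting $p^*(\gamma, M \cap \omega_2^V) = 0$, so that $M \cap \omega_2^V \in \tilde F \setminus \dot X_\gamma$ in the $V^{\mbb P}$-extension. In $V^{\mbb P}$ alone this already forces $B = \emptyset$. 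The real difficulty is adapting the construction to $V^{\mbb P * \dot{\mbb Q}}$: since $\dot{\mbb Q}$ is only $\omega_1$-stationary preserving, it may introduce new clubs in $[\omega_2^V]^\omega$ witnessing $\tilde F \setminus \dot X_\gamma$ non-stationary even when none existed in $V^{\mbb P}$. Overcoming this should require enriching the $M$-generic construction to meet appropriate dense sets of $\dot{\mbb Q}$-names for such clubs, handling the $\omega$-many $\dot T_n$'s by direct elementarity of $M$ and the $\omega_2^V$-many $S_\alpha$'s uniformly via the disjointness bound, the $\sigma$-closedness of $\mbb P$, and the stationary preservation of $\dot{\mbb Q}$.
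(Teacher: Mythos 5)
There is a genuine gap. Your reduction of the problem to ``find $\gamma$ with $\bar{F} \setminus X_\gamma$ stationary in $V^{\mbb{P}*\dot{\mbb{Q}}}$ for every $F$ in the family,'' and your observation that pairwise disjointness of the $X_\gamma$'s gives at most one bad $\gamma$ per stationary $F$, coincide with the first half of the paper's argument. But, as you yourself note, counting then only yields $|B| \leq \omega_2^V$, which does not exclude $B = \omega_2^V$, and your proposed repair cannot work as stated: $\dot{\mbb{Q}}$ is an arbitrary \emph{fixed} name for an $\omega_1$-stationary preserving forcing, so no density or $M$-generic interleaving argument over $\mbb{P} * \dot{\mbb{Q}}$ can prevent it from adding clubs of $[\omega_2^V]^\omega$ that witness $\bar{S}_\alpha \subseteq^* X_\gamma$ for various pairs; proving $B = \emptyset$ is simply not available. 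Moreover you do not need $B$ small --- you only need a single good $\gamma$, and the existence of one is what requires the missing idea.

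That missing idea is a dichotomy using property (iii) of conditions of $\mbb{P}$ (which your proposal never invokes) together with (iv). Work in $V[G*H]$, where all $S_\alpha$ and $T_n$ are stationary. Since each $\bar{T}_n$ is $\subseteq^*$-contained in at most one $X_\beta$, only countably many $\beta$ are bad for the $T_n$'s; pick $\beta$ with $\bar{T}_n \not\subseteq^* X_\beta$ for all $n$. If $\beta$ is also good for every $S_\alpha$, set $\gamma := \beta$. Otherwise $\bar{S}_\alpha \subseteq^* X_\beta$ for some $\alpha$, and after replacing $S_\alpha$ by a stationary subset carrying a different index we may assume $\alpha \neq \beta$. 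By (iii), $X_\alpha \subseteq \bar{S}_\alpha \subseteq^* X_\beta$, while by (iv), $X_\alpha \cap X_\beta = \emptyset$; hence $X_\alpha$ is non-stationary in $[\omega_2^V]^\omega$. Then $[\omega_2^V]^\omega \setminus X_\alpha$ contains a club, so by Lemma \ref{lem:club_shoot_basic} the poset $\mbb{R}_\alpha$ preserves the stationarity of $\bar{S}_{\alpha'} \setminus X_\alpha$ and $\bar{T}_n \setminus X_\alpha$ for all $\alpha' < \omega_2^V$ and $n < \omega$ (these are stationary precisely because $X_\alpha$ is non-stationary), and $\gamma := \alpha$ works. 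This case analysis, not a control over which $\gamma$ become bad, is what closes the proof.
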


\begin{proof}
Let $\lambda := \omega_2^V$.
Suppose $G*H$ is a $\mbb{P} * \dot{\mbb{Q}}$-generic filter over $V$.
In $V[G*H]$, let $X_\alpha := \dot{X}_\alpha^G$ for $\alpha < \lambda$
and $T_n := \dot{T}_n^{G*H}$ for $n < \omega$.
Moreover, let $\mbb{R}_\alpha$ denote $\mbb{R} ( [ \lambda ]^\omega \setminus X_\alpha )$
for $\alpha < \lambda$.
In $V[G*H]$, we find $\gamma < \lambda$ such that
$\mbb{R}_\gamma$ forces
all elements of $\{ S_\alpha \mid \alpha < \lambda \} \cup \{ T_n \mid n < \omega \}$
stationary.
Here note that all $S_\alpha$ and $T_n$ are stationary in $V[G*H]$
by the fact that $\mbb{P} * \dot{\mbb{Q}}$ is $\omega_1$-stationary preserving
and the assumption on $\langle \dot{T}_n \mid n < \omega \rangle$.

We work in $V[ G*H ]$.
For $S \subseteq \omega_1$, let
$\bar{S} := \{ x \in [ \lambda ]^\omega \mid x \cap \omega_1 \in S \}$.
For $X,Y \subseteq [ \lambda ]^\omega$, we write $X \subseteq^* Y$ if
$X \setminus Y$ is non-stationary in $[ \lambda ]^\omega$.
By Lemma \ref{lem:club_shoot_basic}, for $S \subseteq \omega_1$ and $\alpha < \lambda$,
$\mbb{R}_\alpha$ does not force $S \subseteq \omega_1$ stationary if and only if
$\bar{S} \subseteq^* X_\alpha$.

Since $\langle X_\alpha \mid \alpha < \lambda \rangle$ is pairwise disjoint,
for each $n < \omega$ there is at most one $\alpha < \lambda$ with $\bar{T}_n \subseteq^* X_\alpha$.
Since $| \lambda | \geq \omega_1$, we can take $\beta < \lambda$ such that
$\bar{T}_n \not\subseteq^* X_\beta$ for any $n < \omega$.
Then $\mbb{R}_\beta$ forces $T_n$ stationary for all $n < \omega$.
Thus, if $\mbb{R}_\beta$ also forces $S_\alpha$ stationary for all $\alpha < \lambda$,
then $\gamma := \beta$ is as desired.

Assume there is $\alpha < \lambda$ such that $\mbb{R}_\beta$ does not force $S_\alpha$
stationary. By replacing $\alpha$ with $\alpha '$ such that $S_{\alpha '} \subseteq S_\alpha$
if necessary, we may assume that $\alpha \neq \beta$.
Here note that $X_\alpha \subseteq \bar{S}_\alpha$ by the property (iii) of conditions of $\mbb{P}$.
Then, $X_\alpha \subseteq \bar{S}_\alpha \subseteq^* X_\beta$
and $X_\alpha \cap X_\beta = \emptyset$. Hence $X_\alpha$ is non-stationary
in $[ \lambda ]^\omega$. Thus $\mbb{R}_\alpha$ is $\omega_1$-stationary preserving,
and so $\gamma := \alpha$ is as desired.
\end{proof}

Now, we can prove that $\mbb{P}$ preserves $\msf{MM}^{+ \omega}$
by a similar argument as Beaudoin \cite{Beaudoin}:

\begin{lemma} \label{lem:pres_MM+omega}
$\msf{MM}^{+ \omega}$ holds in $V^\mbb{P}$.
\end{lemma}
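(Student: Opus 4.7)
The plan is to adapt the classical Beaudoin-type argument, using Lemma \ref{lem:triple_stat_pres} as the key device. Working in $V$, I fix $p \in \mbb{P}$ forcing $\dot{\mbb{Q}}$ to be $\omega_1$-stationary preserving, $\langle \dot{D}_\xi \mid \xi < \omega_1\rangle$ to be dense subsets of $\dot{\mbb{Q}}$, and $\langle \dot{T}_n \mid n < \omega\rangle$ to be $\dot{\mbb{Q}}$-names for stationary subsets of $\omega_1$. The task is to produce $p^* \leq p$ in $\mbb{P}$ and a $\mbb{P}$-name $\dot{g}$ so that $p^*$ forces $\dot{g}$ to be a filter on $\dot{\mbb{Q}}$ meeting every $\dot{D}_\xi$ and keeping every $\dot{T}_n^{\dot{g}}$ stationary.

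First I would invoke Lemma \ref{lem:triple_stat_pres} to obtain a $\mbb{P}*\dot{\mbb{Q}}$-name $\dot{\gamma}$ such that $\mbb{S} := \mbb{P}*\dot{\mbb{Q}}*\mbb{R}([\omega_2^V]^\omega \setminus \dot{X}_{\dot{\gamma}})$ preserves stationarity of every $S_\alpha$ and every $\dot{T}_n$. Since $\langle S_\alpha \mid \alpha < \omega_2^V\rangle$ enumerates all stationary subsets of $\omega_1$ in $V$, $\mbb{S}$ is $\omega_1$-stationary preserving in $V$ and so is eligible for Fact \ref{fact:forcing_axiom}. I would apply that Fact to $\mbb{S}$ with the sequence $\langle \dot{T}_n \mid n < \omega\rangle$ and a parameter set containing $p$, $\mbb{P}$, $\dot{\mbb{Q}}$, $\dot{\gamma}$, $\mbb{S}$, $\langle \dot{D}_\xi\rangle$ and $\omega_1$, obtaining $M \prec \mcal{H}_\theta$ of cardinality $\omega_1$ together with an $M$-generic filter $g = g_0 * g_1 * g_2$ on $\mbb{S} \cap M$ with $p \in g_0$ and each $\dot{T}_n^g$ stationary in $V$. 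Next I would extract a lower bound $p^* \leq p$ of $g_0$ in $\mbb{P}$: enumerating $g_0$ as $\langle p'_\alpha \mid \alpha < \omega_1\rangle$, I would recursively build a descending chain $\langle q_\xi \mid \xi < \omega_1\rangle$ in $\mbb{P}$ with $q_\xi \leq p'_\alpha$ for all $\alpha \leq \xi$, using $\sigma$-closure (Lemma \ref{lem:P_basic}(2)) at each countable limit; the ``union extended by zeros'' construction from the proof of Lemma \ref{lem:P_basic}(3), applied to the full $\omega_1$-chain, then yields $p^* \in \mbb{P}$ below every $q_\xi$, hence below every element of $g_0$. Finally I would set $\dot{g}$ to be the $\mbb{P}$-name for the filter on $\dot{\mbb{Q}}$ generated by $\{\dot{q} : \exists p' \in g_0,\ \langle p', \dot{q}\rangle \in g_1\}$.

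It remains to verify that $p^*$ forces $\dot{g}$ to have the desired properties. For meeting each $\dot{D}_\xi$: since $\dot{D}_\xi \in M$, the set $\{\langle p',\dot{q},\dot{r}\rangle \in \mbb{S} : p' \Vdash_\mbb{P} \dot{q} \in \dot{D}_\xi\}$ is a dense subset of $\mbb{S}$ lying in $M$, so $g$ meets it, producing a $\dot{q}$ that enters $\dot{g}$ and is forced into $\dot{D}_\xi$. For stationarity of $\dot{T}_n^{\dot{g}}$: below $p^*$ the set $\dot{T}_n^g$ computed in $V$ from the $\mbb{P}*\dot{\mbb{Q}}$-part of $g$ is contained in $\dot{T}_n^{\dot{g}}$ computed in the $\mbb{P}$-extension, and since $\dot{T}_n^g$ is stationary in $V$ and $\mbb{P}$ is $\sigma$-closed, its stationarity persists in $V^\mbb{P}$. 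The main obstacle will be the extraction of $p^*$: one must confirm that the $M$-generic filter $g_0$ supplied by Fact \ref{fact:forcing_axiom} actually admits a lower bound in $\mbb{P}$, which rests on the $\sigma$-closure and the specific union-with-zeros limit construction of Lemma \ref{lem:P_basic}(3), rather than on any generic feature of $M$-generic filters for $\sigma$-closed forcings.
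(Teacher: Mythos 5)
Your overall frame is the right one (apply Lemma~\ref{lem:triple_stat_pres} to get $\dot{\gamma}$ and $\mbb{S}$, apply Fact~\ref{fact:forcing_axiom} to $\mbb{S}$, take $p^*$ from the $\mbb{P}$-part $g_0$ of the resulting filter, and read off the filter on $\mbb{Q}$ from the $\dot{\mbb{Q}}$-part), and your verifications of (i) and of the stationarity of the $\dot{T}_n^{\dot g}$ are essentially the paper's. But there is a genuine gap exactly at the point you flag as ``the main obstacle'': producing a lower bound of $g_0$ in $\mbb{P}$. Your proposed fix --- interleaving $g_0$ with a descending $\omega_1$-chain built by $\sigma$-closure and then applying the union-with-zeros construction of Lemma~\ref{lem:P_basic}(3) --- does not work. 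Since conditions of $\mbb{P}$ are functions ordered by reverse inclusion, $\mbb{P}$ is tree-like: two conditions are compatible only if one extends the other. Hence any $q_\xi$ lying below cofinally many elements of $g_0$ must agree with $\bigcup g_0$ on its domain, so the interleaved chain is forced to reproduce $\bigcup g_0$ and you have gained nothing; if instead you use the zero-extension trick at limit stages of your chain, the resulting $q_\xi$ will disagree with (and hence be incompatible with) later elements of $g_0$, because $M$-genericity forces $\bigcup g_0$ to take the value $1$ cofinally often. Moreover, the final ``extension by zeros'' to $\delta^*\times[\delta^*]^\omega$ is vacuous when $\cof(\delta^*)=\omega_1$: every countable $x\subseteq\delta^*=M\cap\omega_2$ is already contained in $[\delta_p]^\omega$ for some $p\in g_0$, so no new zeros are added and nothing certifies clause (v) of the definition of $\mbb{P}$ at $\delta=\delta^*$, namely that some $X_{p^*,\gamma}\cap[\delta^*]^\omega$ is non-stationary. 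This clause is the only nontrivial obstacle to $p^*\in\mbb{P}$, and it cannot be arranged by hand.

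The missing idea is that the third coordinate of $\mbb{S}$ --- the club-shooting poset $\mbb{R}([\omega_2^V]^\omega\setminus\dot X_{\dot\gamma})$ --- is there precisely to witness clause (v). In the paper's proof one passes to the transitive collapse $\pi:M\to M'$, notes $\pi(\omega_2)=\delta^*$, sets $\gamma:=\pi(\dot\gamma)^{g'*h'}<\delta^*$, and observes that the collapsed $\mbb{R}$-part $i'$ of the generic filter yields, via $\mrm{range}(\bigcup i')$, an honest club subset of $[\delta^*]^\omega$ disjoint from $\bigcup_{p\in g}X_{p,\gamma}=X_{p^*,\gamma}$ (using that each $X_{p,\gamma}$ has size $\le\omega_1$ and so is fixed by $\pi$). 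Your write-up introduces $\mbb{S}$ and its generic $g_0*g_1*g_2$ but never uses $g_2$ or $\dot\gamma$ again; without that step the proof of $p^*\in\mbb{P}$ is incomplete, and this is the heart of the whole argument (it is the reason $\mbb{S}$ is a three-step iteration rather than just $\mbb{P}*\dot{\mbb{Q}}$).
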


\begin{proof}
Let $\dot{\mbb{Q}}$ be a $\mbb{P}$-name for an $\omega_1$-stationary preserving
foricng notion.
For each $\xi < \omega_1$, let $\dot{D}_\xi$ be a $\mbb{P}$-name
for a dense subset of $\dot{\mbb{Q}}$, and for each $n < \omega$, let
$\ddot{T}_n$ be a $\mbb{P}$-name for a $\dot{\mbb{Q}}$-name for a stationary subset of $\omega_1$.
Take an arbitrary $p_0 \in \mbb{P}$.
It suffices to find $p^* \leq p_0$ in $\mbb{P}$ such that
if $G$ is a $\mbb{P}$-generic filter over $V$ with $p^* \in G$,
then in $V[ G ]$ there is a filter $h \subseteq \mbb{Q}$ with the following properties:
\begin{renumerate}
\item $h \cap D_\xi \neq \emptyset$ for any $\xi < \omega_1$.
\item $\dot{T}_n^h$ is stationary in $\omega_1$ for all $n < \omega$.
\end{renumerate}
Here $\mbb{Q}$, $D_\xi$ and $\dot{T}_n$ denote
$\dot{\mbb{Q}}^G$, $\dot{D}_\xi^G$ and $\ddot{T}_n^G$, respectively.

First, we find $p^*$ as above. We work in $V$.
We identify each $\ddot{T}_n$ with a $\mbb{P} * \dot{\mbb{Q}}$-name.
Let $\dot{\gamma}$ and $\mbb{S}$ be as in Lemma \ref{lem:triple_stat_pres}.
Note that $\mbb{S}$ is $\omega_1$-stationary preserving
and each $\ddot{T}_n$ is stationary in $\omega_1$ in $V^\mbb{S}$.
Let $\dot{\mbb{R}}$ be a $\mbb{P} * \dot{\mbb{Q}}$-name for
$\mbb{R} ( [ \omega_2^V ]^\omega \setminus \dot{X}_{\dot{\gamma}} )$.

Take a sufficiently large regular cardinal $\theta$.
By Fact \ref{fact:forcing_axiom}, there are $M \in [ \mcal{H}_\theta ]^{\omega_1}$
and $k \subseteq \mbb{S} \cap M$ such that
\begin{renumerate}
\addtocounter{enumi}{2}
\item $\omega_1 \cup \{ p_0 , \dot{\mbb{Q}} , \dot{\gamma} , \mbb{S} \} \cup
\{ \dot{D}_\xi \mid \xi < \omega_1 \} \cup \{ \ddot{T}_n \mid n < \omega \}
\subseteq M \prec \langle \mcal{H}_\theta , \in \rangle$,
\item $k$ is an $M$-generic filter on $\mbb{S} \cap M$
with $p_0 * 1_{\dot{\mbb{Q}}} * 1_{\dot{\mbb{R}}} \in k$,
\item $\ddot{T}_n^k$ is stationary in $\omega_1$ for any $\xi < \mu$.
\end{renumerate}
Let $\delta^* := M \cap \omega_2 \in \omega_2$, and let
\[
g := \{ p \in \mbb{P} \cap M \mid \exists \dot{q} \, \exists \dot{r} , \ p * \dot{q} * \dot{r} \in k \} \, .
\]
Then, $g$ is an $M$-generic filter on $\mbb{P} \cap M$.

Note that $\sup_{p \in g} \delta_p = \delta^*$
by Lemma \ref{lem:P_basic} (4) and the $M$-genericity of $g_0$.
Let $p^* : \delta^* \times [ \delta^* ]^\omega \to 2$ be an extension of $\bigcup g$
such that $p^* ( \alpha , x ) = 0$ for all $\langle \alpha , x \rangle \notin \dom ( \bigcup g )$.
We claim that $p^*$ is as desired.
For this, we use the transitive collapse of $M$.
First, we make some preliminaries on it.

Let $\pi : M \to M'$ be the transitive collapse of $M$, and let
$\mbb{P}'$, $\dot{\mbb{Q}}'$, $\dot{\mbb{R}}'$, $\mbb{S}'$, $k'$ and $g'$ be
$\pi ( \mbb{P} )$, $\pi ( \dot{\mbb{Q}} )$, $\pi ( \dot{\mbb{R}} )$, $\pi ( \mbb{S} )$, $\pi [k]$
and $\pi [g]$, respectively.
Note that $\mbb{S} ' = \mbb{P} ' * \dot{\mbb{Q}} ' * \dot{\mbb{R}} '$ in $M'$.
Moreover, $k'$ is an $\mbb{S}'$-generic filter over $M'$, and
$g'$ is the $\mbb{P}'$-generic filter over $M'$ naturally obtained from $k'$.
Let $h'$ be the $( \dot{\mbb{Q}} ' )^{g '}$-generic filter over $M'[ g' ]$
naturally obtained from $k'$, and let $i'$ be the $( \dot{\mbb{R}} ' )^{g' * h'}$-generic
filter over $M' [ g' * h' ]$ naturally obtained from $k'$.

Now, we start to prove that $p^*$ is as desired.
First, we prove that $p^* \in \mbb{P}$.
We only check that $X_{p^* , \gamma}$ is non-stationary in $[ \delta^* ]^\omega$
for some $\gamma < \delta^*$. The other properties are easily checked.

First of all, note that $\pi \rst ( \mcal{H}_{\omega_2} \cap M )$ is the identity map since
$\mcal{H}_{\omega_2} \cap M$ is transitive and that $\pi ( \omega_2 ) = \delta^*$.
Let $\gamma := \pi ( \dot{\gamma} )^{g' * h'} < \pi ( \omega_2 ) = \delta^*$.
Then $\mrm{range} ( \bigcup i' )$ is a club subset of $[ \delta^* ]^\omega$
which does not intersect $\bigcup_{p' \in g'} X_{p' , \gamma} = \bigcup_{p \in g} \pi ( X_{p , \gamma} )$.
Here note that $X_{p , \gamma} \in \mcal{H}_{\omega_2} \cap M$ for all $p \in g$
by the property (ii) of conditions in $\mbb{P}$.
So $\bigcup_{p \in g} \pi ( X_{p , \gamma} ) = \bigcup_{p \in g} X_{p , \gamma} = X_{p^* , \gamma}$.
Hence $X_{p^* , \gamma}$ is non-stationary in $[ \delta^* ]^\omega$

We have shown that $p^* \in \mbb{P}$.
Note that $p^*$ is a lower bound of $g$. Then $p^* \leq p$ since $p \in g$ by (iv).
Suppose $G$ is a $\mbb{P}$-generic filter over $V$ with $p^* \in G$.
Working in $V[G]$, we find a filter $h \subseteq \mbb{Q}$ satisfying (i) and (ii).

Let $M[G]$ denote the collection of $\dot{a}^G$ for all $\mbb{P}$-names $\dot{a} \in M$,
and define $\hat{\pi} : M[G] \to M'[g']$ by $\hat{\pi} ( \dot{a}^G ) := \pi ( \dot{a} )^{g'}$.
It is easy to see that $\hat{\pi}$ coincides with the transitive collapse of $M[G]$
and that $\hat{\pi}$ extends $\pi$.
Let $h$ be the filter on $\mbb{Q}$ generated by $\hat{\pi}^{-1} [ h' ]$.
Then $h$ satisfies (i) since $D_\xi \in M[G]$ and $h' \cap \hat{\pi} ( D_\xi ) \neq \emptyset$
for all $\xi < \omega_1$.
As for (ii), it is easy to see that $\dot{T}_n^h = \ddot{T}_n^k$ for each $n < \omega$.
Then, $h$ satisfies (ii) by (v).
\end{proof}

%%%%%%%%%%%%%%%%%%%%%%%%%%%%%%%%%%%%%%%%%%%%%%%%%%%%%%%%%%%%
%%%%%%%%%%%%%%%%%%%%%%%%%%%%%%%%%%%%%%%%%%%%%%%%%%%%%%%%%%%%

\end{document}